\def\nd{\noindent}
\newtheorem{theorem}{Theorem}[section]
\newtheorem{lemma}{Lemma}[section]
\newcommand{\fim}{\hfill\rule{2mm}{2mm}}
\date{}
\begin{document}
\title{
\vspace{0.5in} {\bf\Large  Infinite many Blow-up solutions for  a  Schr\"odinger quasilinear elliptic problem with a non-square diffusion term}}

\author{
{\bf\large Carlos Alberto Santos}\footnote{Carlos Alberto Santos acknowledges
the support of CAPES/Brazil Proc.  $N^o$ $2788/2015-02$,}\,\, ~~~~~~ {\bf\large Jiazheng Zhou}\footnote{Jiazheng Zhou was supported by CNPq/Brazil Proc. $N^o$ $232373/2014-0$, }\ \ \ 
\hspace{2mm}\\
{\it\small Universidade de Bras\'ilia, Departamento de Matem\'atica}\\
{\it\small   70910-900, Bras\'ilia - DF - Brazil}\\
{\it\small e-mails: csantos@unb.br,
jiazzheng@gmail.com }\vspace{1mm}\\
}

\date{}
\maketitle \vspace{-0.2cm}

\begin{abstract}
In this paper, we consider existence of positive solutions for the Schr\"odinger quasilinear elliptic problem
$$
  \left\{
\begin{array}{l}
\Delta_pu+\Delta_p(|u|^{2\gamma})|u|^{2\gamma-2}u  = a(x)g(u)~ \mbox{on}~ \mathbb{R}^N,\\
u>0\  \mbox{in}~\mathbb{R}^N,\  u(x)\stackrel{\left|x\right|\rightarrow \infty}{\longrightarrow} \infty,
\end{array}
\right.
$$
where $a(x), ~x\in \mathbb{R}^N$ and $g(s)~s>0$ are a nonnegative and continuous functions with $g$ being nonincreasing as well,  $\gamma>{1}/{2}$, and $N \geq 1$. By a dual approach we establish sufficient conditions for existence and multiplicity of solutions for this problem.
\end{abstract}

\nd {\it \footnotesize 2012 Mathematics Subject Classifications:} {\scriptsize  35J10,  35J92,  35B07, 35B08, 35B44.}\\
\nd {\it \footnotesize Key words}: {\scriptsize Schr\"odinger equations, Blow up solutions, Quasilinear problems, Non-square diffusion, Multiplicity of solutions.}

\section{Introduction}
\def\theequation{1.\arabic{equation}}\makeatother
\setcounter{equation}{0}

In this paper, let us consider the problem
\begin{equation}\label{P}
 \left\{
\begin{array}{l}
\Delta_pu+\Delta_p(|u|^{2\gamma})|u|^{2\gamma-2}u  = a(x)g(u)\quad \mbox{on } \mathbb{R}^N,\\
u>0\  \mbox{in}~\mathbb{R}^N,\  u(x)\stackrel{\left|x\right|\rightarrow \infty}{\longrightarrow} \infty,
\end{array}
\right.
\end{equation}
where $\Delta_pu=div(|\nabla u|^{p-2}\nabla u)$ with $1<p<\infty$ is called the $p-Laplacian$ operator, $a(x),~x \in \mathbb{R}^N$ is a nonnegative continuous function, $g(s),~s\geq0$ is a nondecreasing continuous function that satisfies $g(0)=0$,   $\gamma>1/2$, and  $N\geq 1$.

In the case $p=2$, the equation (\ref{P}) is referred in the literature as a modified nonlinear Scho\"odinger equation because it contains a quasilinear and nonconvex term $\Delta(|u|^{2\gamma})|u|^{2\gamma-2}u$. This quasilinear term  is called of non-square diffusion for $\gamma\neq 1$ and square diffusion for $\gamma=1$. In particular when $\gamma=1$, the solution of (\ref{P}) is related to standing wave solutions for the quasilinear Schr$\ddot{o}$dinger equation
\begin{equation}\label{i}
 iz_t+\Delta z-\omega(x)z+\kappa\Delta(h(|z|^2))h'(|z|^2)z+\eta(x,z)=0,\ \ x\in\mathbb{R}^N,
\end{equation}
where $\omega$ is a potential given , $h$ and $\eta$ are real functions and $\kappa$ is a real constant. This connecting is established  by the fact that $z(t,x)=e^{-i\beta t}u(x)$ is a solution to the 
 equation (\ref{i}),  if $u$ satisfies the equation in  (\ref{P}), for suitable $\omega$, $h$ and $\eta$.

The quasilinear Schr$\ddot{o}$dinger equation (\ref{i}) is an important model that comes from of several mathematical and physical phenomena,
for example, if $h(s)=s$ it models a superfluid film in plasma physics \cite{kurihara}, while for $h(s)=(1+s)^{1/2}$, the equation (\ref{i}) models the self-channeling of a high-power ultrashort laser in matter \cite{kosevich} and \cite{quispel}. In addition, it also appears in the theory of Heidelberg ferromagnetism and magnus \cite{hasse},\cite{makhankov}; in dissipative quantum mechanics 
\cite{adachi}; and in condensed matter theory \cite{byeon}. 

On the other side, it is well-known that the blow-up condition appears  in the study of population dynamics, subsonic motion of a gas, non-Newtonian fluids, non-Newtonian filtration as well as in the theory of the electric potential in a glowing hollow metal body. The research of this subject  passed by a great development with the works of   Keller \cite{K} and Osserman
 \cite{O} in 1957 that established necessary and sufficient conditions for existence of solutions for the semilinear and autonomous problem (that is, $p=2$ and $a \equiv 1$)
 \begin{equation}
 \label{P1}
 \left\{
\begin{array}{l}
 \Delta_p u=a(x)g(u)\ \ \mbox{in}\ \mathbb{R}^N,\\
u\geq 0\  \mbox{in}\ \mathbb{R}^N,\  u(x)\stackrel{\left|x\right|\rightarrow \infty}{\longrightarrow} \infty,
\end{array}
\right.
 \end{equation}
where $g$ is a non-decreasing  continuous function. Keller established that

\noindent$
{\bf(G):}~~~~~~\displaystyle\int_1^{+\infty} \frac{dt}{\sqrt[p]{G(t)}}=\infty,~\mbox{where}~G(t)=\int_0^tg(s)ds,~t>0
$

\noindent is a sufficient and necessary condition for the problem $(\ref{P1})$ to have solution. In this same year, Osserman proved the same result for sub solutions 
of $(\ref{P1})$. 
After these works, when {\bf(G)} is not satisfied, the function $g$ has become well-known as a $Keller-Osserman$ function.

After this, a number of researchers have worked in related problems. These researches have showed that the existence of solutions for (\ref{P1}) is very sensible to the ``how radial" is $a(x)$ at infinity, that is, how big is the number 
$$a_{osc}(r):= \overline{a}(r) - \underline{a}(r),~r\geq 0,$$
where
\begin{equation}
\label{defa}
\underline{a}(r)=\min\{a(x)~/~ |x|= r\}~~\mbox{and}~~ \overline{a}(r) =\max\{a(x)~/~ |x| = r\},~r\geq 0.
\end{equation}

Note that $a_{osc}(r)= 0$, $r\geq r_0$ if, and only if, $a$ is symmetric radially in $\vert x \vert \geq r_0$, for some $r_0\geq 0$. That is, if $r_0=0$ we say that $a$ is radially symmetric.

In this direction, for $p=2$ and $a$ radially symmetric, Lair and Wood in \cite{lairwood} considered  $g(u)=u^{\gamma}$, $u\geq 0$ with $0 < \gamma \leq 1$ (that is, $g$ does not satisfies $\bf{(G)})$ and showed that 
\begin{equation}
\label{conda}
\int_1^{\infty} r a(r) dr = \infty.
\end{equation}
is a sufficient condition for (\ref{P1}) has a radial solution.

Still with $p=2$, in 2003, Lair \cite{lair3} enlarged the class of potentials $a(x)$ by permitting $a_{osc} $ to assume values not identically null, but not too big ones. More exactly, he assumed
$$\int_0^{\infty}r a_{osc}(r) exp(\underline{A}(r)) dr< \infty,~~\mbox{where}~~ \underline{A}(r)= \int_0^r s\underline{a}(s) ds,~r\geq 0 
$$
and proved that (\ref{P1}), with suitable $f$ that includes $u^{\gamma}$ for $0< \gamma \leq 1$, admits a solution if, and only if, (\ref{conda}) holds with $\underline{a}$ in the place of $a$. Keeping us in this context, we also quote Rhouma and Drissi \cite{drissi} for $2 \leq p \leq N$, and references therein.

Coming back to problem (\ref{P}), we note that issues about existence and multiplicity of solutions for equations related to the equation in (\ref{P}) (since positive, negative to nodal solutions)  have been treated by a number of researchers recently, but there is no accurate results for existence of solutions to  (\ref{P}), that is, with the blow up behavior for the solutions. See for instance \cite{lww, wu, dly, eg100, eg, kw, jxw} and references therein.

Before stating ours principal results, we set that a solution of (\ref{P}) is a positive function 
 $u\in C^1(\mathbb{R}^N)$ that satisfies  $u \to \infty$ as $\vert x \vert \to \infty$ and
 $$
  \left.
\begin{array}{l}
\displaystyle\int_{\mathbb{R}^N}(1+(2\gamma)^{p-1}u^{p(2\gamma-1)})|\nabla u|^{p-2}\nabla u\nabla\varphi dx~+~~~~~~~~~~\\
\\
~~~~~~~~~~~~~~~~\displaystyle(2\gamma)^{p-1}(2\gamma-1)\int_{\mathbb{R}^N}|\nabla u|^pu^{p(2\gamma-1)-1}\varphi 
dx+\int_{\mathbb{R}^N}a(x)g(u)\varphi dx=0,~\mbox{for all}~ \varphi\in C_0^{\infty}(\mathbb{R}^N),
\end{array}
\right.
 $$
 and consider the assumption on $g$
\smallskip

\noindent$
{\bf(g):}~~~~~~\displaystyle\liminf_{t\to\infty}\frac{g(t)}{t^{2\gamma(2\gamma-1)}}>0.
$
\smallskip

\noindent holds.

We point out that this hypothesis  is so natural, because when $\gamma=1/2$ (that is, the problem (\ref{P}) reduces to (\ref{P1})), it reduces to a standard condition for (\ref{P1})). Our first result is.

\begin{theorem}\label{THRN}
Assume that $g$ satisfies {\bf(g)} and {\bf(G)}. If $a(x)$ is such that $a_{osc}\equiv 0$ and 
\begin{equation}\label{a}
 \int_0^\infty\Big(s^{1-N}\int_0^st^{N-1}a(t)dt\Big)^{\frac{1}{p-1}}ds=\infty
\end{equation}
holds, then there exists a positive constant $\mathcal{A}$ such that $\mathcal{A}_a=(\mathcal{A},\infty)$, 
where
$$\mathcal{A}_a=\{\alpha>\mathcal{A}~/~ (\ref{P})~ \mbox{admits a radial solution with}\ u(0)=\alpha\}.$$   
\end{theorem}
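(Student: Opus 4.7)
The hypothesis $a_{osc}\equiv 0$ forces $a(x)=a(|x|)$, so I look for radial solutions $u(r)$ with $u(0)=\alpha$ and $u'(0)=0$. Expanding the two $p$-Laplacians in spherical coordinates and applying the product rule to the cross term, problem (\ref{P}) reduces to the ODE
\[
\bigl(r^{N-1}(1+(2\gamma)^{p-1}u^{(2\gamma-1)p})(u')^{p-1}\bigr)'=r^{N-1}\bigl[a(r)g(u)+(2\gamma)^{p-1}(2\gamma-1)\,u^{(2\gamma-1)p-1}(u')^p\bigr];
\]
the right-hand side being nonnegative guarantees $u'\ge 0$ automatically. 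To straighten the principal part---this is the ``dual approach'' announced in the abstract---I introduce $v=V(u)$ where $V(t)=\int_0^t(1+(2\gamma)^{p-1}s^{(2\gamma-1)p})^{1/(p-1)}\,ds$, a $C^1$ diffeomorphism of $[0,\infty)$. Then $(v')^{p-1}=(1+(2\gamma)^{p-1}u^{(2\gamma-1)p})(u')^{p-1}$, so the left-hand side becomes the standard radial $p$-Laplacian $(r^{N-1}(v')^{p-1})'$, at the cost of a $v'$-dependent remainder on the right obtained by re-expressing $(u')^p$ through $(v')^p/V'(u)^p$.

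On this reformulation I would run a shooting scheme. A Schauder fixed-point argument applied to the integral equation $v'(r)=\bigl(r^{1-N}\int_0^r s^{N-1}F(s,v,v')\,ds\bigr)^{1/(p-1)}$ with $v(0)=V(\alpha)$ yields, for each $\alpha>0$, a local nondecreasing $C^1$ solution $u_\alpha$ depending continuously on $\alpha$. To extend $u_\alpha$ to $[0,\infty)$ I would invoke the Keller--Osserman condition \textbf{(G)}: multiplying the equation by an appropriate factor and integrating produces an a priori bound of the form $(v')^p\lesssim C(1+G_V(v))$ on every interval where $u_\alpha$ remains bounded, where $G_V$ is the ``effective'' primitive obtained from $g$ by the change of variable; a finite-distance singularity would then contradict the divergence of $\int^\infty\!dw/G_V(w)^{1/p}$, which itself follows from \textbf{(G)}. \emph{This is the main technical obstacle of the proof.} Closing the a priori estimate requires absorbing the quasilinear perturbation $(2\gamma)^{p-1}(2\gamma-1)u^{(2\gamma-1)p-1}(u')^p$ into $a(r)g(u)$ for large $u$, and the precise exponent $2\gamma(2\gamma-1)$ in hypothesis \textbf{(g)} is calibrated exactly to make this absorption work; when $\gamma=1/2$ the perturbation disappears and \textbf{(g)} becomes trivial, consistent with the classical Keller--Osserman reduction to (\ref{P1}).

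Once a global $u_\alpha$ is in hand, the rest follows a standard template. Using $g$ nondecreasing and $u_\alpha(r)\ge\alpha$, two integrations of the radial equation yield
\[
u_\alpha(r)\ge\alpha+c\,\bigl(g(\alpha)\bigr)^{1/(p-1)}\!\int_0^r\!\Bigl(s^{1-N}\!\int_0^s\!t^{N-1}a(t)\,dt\Bigr)^{\!1/(p-1)}\,ds,
\]
which tends to $+\infty$ by hypothesis (\ref{a}) whenever $g(\alpha)>0$, in particular for $\alpha$ beyond the threshold guaranteed by \textbf{(g)}; hence $\mathcal{A}_a\neq\varnothing$. A comparison principle applied to the shooting ODE gives $u_{\alpha_1}\le u_{\alpha_2}$ whenever $\alpha_1<\alpha_2$, so $\mathcal{A}_a$ is upward closed; setting $\mathcal{A}:=\inf\mathcal{A}_a\ge 0$ and using continuity of $\alpha\mapsto u_\alpha$ together with the fact that ``$u_\alpha$ stays bounded on $[0,\infty)$'' is preserved under locally uniform limits shows the infimum is not attained, giving the advertised $\mathcal{A}_a=(\mathcal{A},\infty)$.
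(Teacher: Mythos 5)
Your overall architecture (change of variables to straighten the operator, shooting, Keller--Osserman for global existence, condition (\ref{a}) for divergence at infinity) matches the paper's, but there is a genuine gap at exactly the point you flag as ``the main technical obstacle'': your substitution does not remove the gradient term, and you never show how to absorb it. With $v=V(u)$, $V'(u)=(1+(2\gamma)^{p-1}u^{p(2\gamma-1)})^{1/(p-1)}$, the principal part indeed becomes $(r^{N-1}(v')^{p-1})'$, but the right-hand side retains $(2\gamma)^{p-1}(2\gamma-1)u^{p(2\gamma-1)-1}(v')^p/V'(u)^p$, so your Keller--Osserman estimate must control a genuinely gradient-dependent nonlinearity; the sketched absorption into $a(r)g(u)$ cannot work where $a$ vanishes (the theorem only assumes $a\geq 0$), and hypothesis {\bf(g)} is not what makes it work. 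The paper's choice is different and is the whole point of the dual approach: $f$ is defined by $f'(t)=[1+(2\gamma)^{p-1}|f(t)|^{p(2\gamma-1)}]^{-1/p}$ (exponent $1/p$, not $1/(p-1)$), i.e.\ $(f^{-1})'(u)=(1+(2\gamma)^{p-1}u^{p(2\gamma-1)})^{1/p}$. Writing the flux as $(1+\cdots)(u')^{p-1}=(1+\cdots)^{1/p}(w')^{p-1}$ and differentiating, the term coming from $[(1+\cdots)^{1/p}]'$ equals \emph{exactly} $(2\gamma)^{p-1}(2\gamma-1)u^{p(2\gamma-1)-1}(u')^p$ and cancels the quasilinear perturbation identically, leaving the clean problem $\Delta_p w=a(x)g(f(w))f'(w)$ (Lemma \ref{lemma2}). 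After that, global existence follows because $g(f(t))f'(t)\leq g(t)$ (Lemma \ref{lemma1}-$(f)_2$, $(f)_3$), so {\bf(G)} transfers verbatim to the transformed nonlinearity; no new a priori estimate is needed.

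Two further points. First, you misread the role of {\bf(g)}: it is not used to absorb the gradient term but to guarantee that the transformed nonlinearity has a positive lower bound, $g(f(t))f'(t)\geq M>0$ for $t>\mathcal{A}$, via $f(t)\sim At^{1/2\gamma}$ and $f'(t)\gtrsim t^{1-2\gamma}$ from $(f)_7$; this lower bound combined with (\ref{a}) is what forces $w_\alpha(r)\to\infty$, and it is also where the constant $\mathcal{A}$ comes from (so $\mathcal{A}_a=(\mathcal{A},\infty)$ holds for \emph{every} $\alpha>\mathcal{A}$ directly --- no monotonicity-in-$\alpha$, upward-closedness, or ``infimum not attained'' argument is needed or made). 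Second, your displayed lower bound $u_\alpha(r)\geq\alpha+c\,g(\alpha)^{1/(p-1)}\int_0^r(\cdots)$ in the \emph{original} variable is not justified: integrating the original equation produces the factor $(1+(2\gamma)^{p-1}u^{p(2\gamma-1)})^{-1/(p-1)}$, which decays as $u$ grows and cannot simply be dropped. The correct route is to prove divergence for $w_\alpha$ and then transfer it to $u=f(w)$ via $(f)_5$ and $(f)_8$.
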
 

For non-radial potentials $a(x)$, motivated by recent works, we assume that $\mathcal{G}:(0,\infty) \to (0,\infty)$ defined by
$$\mathcal{G}(t)=\frac{t}{2}g(t)^{{-1}/({p-1})},~t>0,$$ is a non-decreasing and invertible function such that
\begin{enumerate}
\item [\bf{($\mathcal{G}$):}]  $0\leq\overline{H}:=\displaystyle\int_0^\infty\Big(s^{1-N}\int_0^s t^{N-1}a_{osc}(t)dt \Big)^{1/p-1}\Big [g\Big(\mathcal{G}^{-1}
\Big(s\Big(\int_0^s\overline{a}(t)dt\Big)^{1/p-1}\Big)\Big)\Big]^{1/p-1}ds<\infty$
\end{enumerate}
holds.

After this, we state our second result.

\begin{theorem}\label{T2}
Assume $p\geq 2$, $g$ satisfies  {\bf(g)}, {\bf(G)} and $g(t)/t^\delta, t>0$ is non-decreasing for some $\delta\geq2\gamma-1$.  Suppose also that  $a(x)$  is such that $\underline{a}$ satisfies (\ref{a}) and $\overline{a}$ satisfies $\bf{(\mathcal{G})}$. Then  there exists a solution $u\in C^1(\mathbb{R}^N)$ of the problem $(\ref{P})$ satisfying 
$\alpha\leq u(0) \leq (\alpha + \varepsilon)+ \overline{H}$, for each $\alpha>\mathcal{A},\varepsilon>0$ given.
\end{theorem}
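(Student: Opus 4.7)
The plan is to employ the dual-variable change suggested by the paper together with a sub- and supersolution method on exhausting balls. I would introduce an auxiliary function $v=f(u)$, where $f$ is determined by $(f'(u))^p = 1+(2\gamma)^{p-1}u^{p(2\gamma-1)}$, so that the two gradient terms on the left-hand side of (\ref{P}) combine into a single $p$-Laplacian $\Delta_p v$ (modulo a positive factor), turning the equation into a semilinear-type problem amenable to a standard comparison method. The inverse $f^{-1}$ grows, up to constants, like $\mathcal{G}^{-1}$, and this is precisely what links the hypothesis $(\mathcal{G})$ to a pointwise bound on $u$.

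With this reformulation in hand, I would first apply Theorem \ref{THRN} to the radial problem driven by $\underline{a}$. Since $\underline{a}$ satisfies (\ref{a}) and $g$ satisfies (g), (G), for each $\alpha>\mathcal{A}$ there is a radial entire solution $\underline{u}_\alpha$ with $\underline{u}_\alpha(0)=\alpha$ and $\underline{u}_\alpha(x)\to\infty$ as $|x|\to\infty$. Because $\underline{a}(|x|)\leq a(x)$ and $g\geq 0$, $\underline{u}_\alpha$ is a subsolution of (\ref{P}). I would then build a radial supersolution $\overline{u}$ by solving, in the dual variable, the first-order radial ODE naturally associated with the upper radial problem driven by $\overline{a}$, with initial datum $\overline{u}(0)=\alpha+\varepsilon$. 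The integrand appearing after inverting the substitution is exactly the one inside $(\mathcal{G})$, so that hypothesis ensures $\overline{u}$ is finite on all of $\mathbb{R}^N$ and that $\overline{u}(|x|)\leq \alpha+\varepsilon+\overline{H}$ for every $x$. A direct computation in radial coordinates, using $p\geq 2$ and the monotonicity of $g(t)/t^\delta$ with $\delta\geq 2\gamma-1$, shows that $\overline{u}$ is a supersolution of (\ref{P}) and that $\underline{u}_\alpha\leq \overline{u}$ on $\mathbb{R}^N$ (for $\varepsilon>0$ small the ordering of initial data propagates outward by comparison).

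With the ordered pair $\underline{u}_\alpha\leq \overline{u}$ available, I would solve on each ball $B_R$ the Dirichlet problem for (\ref{P}) with boundary datum $\overline{u}|_{\partial B_R}$ via a monotone iteration carried out in the dual variable $v$, producing a solution $u_R$ with $\underline{u}_\alpha\leq u_R\leq \overline{u}$ on $B_R$. Standard $C^{1,\beta}_{\mathrm{loc}}$ regularity for quasilinear operators of $p$-Laplacian type and a diagonal subsequence argument as $R\to\infty$ then deliver a solution $u\in C^1(\mathbb{R}^N)$ of (\ref{P}) with $\underline{u}_\alpha\leq u\leq \overline{u}$. This immediately gives both the bound $\alpha\leq u(0)\leq \alpha+\varepsilon+\overline{H}$ and, since $\underline{u}_\alpha$ blows up at infinity, $u(x)\to\infty$ as $|x|\to\infty$. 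The main obstacle I expect is the rigorous construction of the supersolution: one must verify that the radial ODE produced from $\overline{a}$ via the dual substitution is a genuine pointwise supersolution of the full non-radial quasilinear operator in (\ref{P}), and that the lower-order term $(2\gamma)^{p-1}(2\gamma-1)|\nabla u|^p u^{p(2\gamma-1)-1}$ carries the correct sign so that the comparison principle survives the transformation. This is exactly where the hypotheses $p\geq 2$ and the monotonicity of $g(t)/t^\delta$ enter essentially.
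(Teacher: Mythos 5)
Your overall scheme (a radial sub/supersolution pair obtained from Theorem \ref{THRN} after the change of variables, Dirichlet problems on exhausting balls in the dual variable, and a diagonal limit) is indeed the paper's scheme, but you have the roles of $\underline{a}$ and $\overline{a}$ inverted, and this collapses the comparison argument. For the transformed equation $\Delta_p w = a(x)\,g(f(w))f'(w)$ with nonnegative right-hand side, a subsolution must satisfy $\Delta_p \underline{w} \ge a(x)\,g(f(\underline{w}))f'(\underline{w})$; hence the radial solution driven by $\overline{a}$ (not $\underline{a}$) is the subsolution, because $\overline{a}(|x|)\ge a(x)$, while the radial solution driven by $\underline{a}$ is the supersolution. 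Your $\underline{u}_\alpha$, driven by $\underline{a}$, satisfies $\Delta_p \underline{u}_\alpha=\underline{a}\,(\cdots)\le a(x)(\cdots)$, which is the supersolution inequality, so it cannot serve as a lower barrier. Once the roles are assigned correctly the real difficulty appears: the subsolution (driven by the \emph{larger} coefficient $\overline{a}$, started at $\alpha$) grows \emph{faster} than the supersolution (driven by $\underline{a}$, started at $\beta$), so the ordering of the two barriers is not obtained by ``propagating the ordering of initial data outward''. It is exactly here that $(\mathcal{G})$ and the shift $\beta=(\alpha+\varepsilon)+\overline{H}$ enter: the paper sets $S(\beta)=\sup\{r>0:\ w_\alpha(r)<w_\beta(r)\}$ and shows that a first contact point would force, via the a priori estimate $w_\alpha(r)\le \mathcal{G}^{-1}\bigl(r\bigl(\int_0^r\overline{a}\bigr)^{1/(p-1)}\bigr)$, the comparison of the two integral representations through $a_{osc}$, and the monotonicity facts $(f)_{10}$ and $g(t)/t^\delta$ nondecreasing, the contradiction $\beta\le\alpha+\overline{H}$. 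None of this is present in your proposal, and it is the heart of the theorem.

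There is also an internal inconsistency in your construction of the upper barrier: you claim $(\mathcal{G})$ yields $\overline{u}(|x|)\le\alpha+\varepsilon+\overline{H}$ for every $x$, while simultaneously requiring $\underline{u}_\alpha\le\overline{u}$ with $\underline{u}_\alpha(x)\to\infty$ as $|x|\to\infty$; these two statements cannot both hold. In the correct argument $(\mathcal{G})$ does not bound the supersolution globally (both barriers must blow up at infinity); it bounds the \emph{gap} between the two radial barriers at a putative first crossing, i.e.\ it controls the oscillation $a_{osc}$, which is the only reason a non-radial $a$ can be handled. Your final step (monotone iteration on balls between ordered barriers, $C^{1,\beta}_{\mathrm{loc}}$ estimates, diagonal extraction) does match the paper, but it cannot be run until the barrier pair is fixed as above.
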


We organized this paper in the following way. In the  section 2, we establish an equivalent problem to the (\ref{P}), via an very specific changing variable, and in the last section we completed the proof of theorems 1.1 and 1.2.

\section{Auxiliar results}

We begin this section proving a result that permits us to transform  (\ref{P}) into a new problem with an structure in what is clearest to see how the $Keller-Osserman$ condition works. As noted before, this condition is fundamental  to show existence of solutions that blow-up at infinity. This approach of changing the (\ref{P}) for another one was introduced by \cite{lww} (for $\gamma=1$ and $p=2$) and followed by a number of authors to study related equations to (\ref{i}).

To do this, motivated by \cite{dly} with $p=2$ and \cite{uber} with $\gamma=1$, we are going to consider $f$ given by the solution of the equation
\begin{equation}
\label{mv}
f'(t)=\frac{1}{[1+(2\gamma)^{p-1}|f(t)|^{p(2\gamma-1)}]^{{1}/{p}}},\ t\in(0,\infty);\ f(t)=-f(-t),\ t\in(-\infty,0],
\end{equation}
and we are able to prove the next Lemma.
\begin{lemma}\label{lemma1} Assume $p>1$ and $\gamma>1/2$ hold. Then $f$ satisfies:
\begin{enumerate}
\item [$(f)_1$] $f\in C^1$ is uniquely defined and invertible, 
\item [$(f)_2$] $0<f'(t)\leq1$ for all $t\in\mathbb{R}$,
\item [$(f)_3$] $|f(t)|\leq |t|$ for all $t\in\mathbb{R}$,
\item [$(f)_4$]  ${f(t)}/{t}\to1$ when $t\to 0$,
\item [$(f)_5$] $|f(t)|^{2\gamma}\leq (2\gamma)^{1/p}|t|$ for all $t\in\mathbb{R}$,
\item [$(f)_6$]  ${f(t)}/{2}\leq\gamma tf'(t)\leq\gamma f(t)$ for all $t\geq0$,
\item [$(f)_7$] ${|f(t)|}/{|t|^{1/2\gamma}}\to A>0$ when $|t|\to\infty$, where $A$ is a constant,
\item [$(f)_8$] there exists a positive constant $C$ so that
$|t|\leq C[|f(t)|+|f(t)|^{2\gamma}]$ for all $t\in\mathbb{R}$,
\item [$(f)_9$] $|f(t)|^{(2\gamma-1)}f'(t)\leq1/2^{{(p-1)}/{p}}$ for all $t\in\mathbb{R}$,
\item [$(f)_{10}$] $f'(t)f(t)^\delta$ is non-decreasing for all $t\geq0$ and $\delta\geq2\gamma-1$.
\end{enumerate}
\end{lemma}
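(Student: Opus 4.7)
The strategy is to invert the defining ODE and work with the explicit primitive. Since the right-hand side of (\ref{mv}) is strictly positive and $f$ is odd, $f$ itself will be strictly increasing on $\mathbb{R}$, so we may write
$$t \;=\; F(f(t)), \qquad F(y):=\int_0^{y}\bigl[1+(2\gamma)^{p-1}|s|^{p(2\gamma-1)}\bigr]^{1/p}\,ds,$$
for $y\in\mathbb{R}$. Because the integrand of $F$ is continuous, positive and even, $F$ is an odd $C^1$-diffeomorphism of $\mathbb{R}$; this immediately yields $(f)_1$, and since $F'(0)=1$, also $(f)_4$. Property $(f)_2$ is read off from $f'=1/F'\circ f$ and $F'\ge 1$, and integrating the bound $f'\le 1$ from $0$ against oddness gives $(f)_3$.

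For $(f)_5$ I would differentiate $f^{2\gamma}$ and, after setting $u=(2\gamma)^{p-1}f^{p(2\gamma-1)}$, rewrite
$$\bigl(f^{2\gamma}\bigr)'\;=\;2\gamma\,f^{2\gamma-1}f'\;=\;(2\gamma)^{1/p}\,\Bigl(\frac{u}{1+u}\Bigr)^{1/p}\;\le\;(2\gamma)^{1/p},$$
and integrate. For $(f)_8$, valid for $p>1$, the subadditivity $(a+b)^{1/p}\le a^{1/p}+b^{1/p}$ applied inside $F$ gives $t=F(y)\le y+(2\gamma)^{-1/p}y^{2\gamma}$, hence $|t|\le C(|f(t)|+|f(t)|^{2\gamma})$. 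For $(f)_7$, asymptotic analysis of $F(y)$ at infinity shows $F(y)\sim(2\gamma)^{-1/p}y^{2\gamma}$, whence $f(t)/t^{1/(2\gamma)}\to(2\gamma)^{1/(2\gamma p)}>0$.

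Property $(f)_6$ is the most delicate and the place where I would slow down. Writing the two inequalities in terms of $F$, they translate into $F(y)\le y\,F'(y)$ (upper bound) and $y\,F'(y)\le 2\gamma\,F(y)$ (lower bound). The first follows because $(yF'-F)'=yF''\ge 0$ (as $F''\ge 0$ since $2\gamma>1$) and the expression vanishes at $0$. For the second I would compute, with $c=(2\gamma)^{p-1}$,
$$\bigl(y F'(y)-2\gamma F(y)\bigr)'\;=\;-(2\gamma-1)\bigl[1+c\,y^{p(2\gamma-1)}\bigr]^{1/p-1}\;\le\;0,$$
which, together with the zero initial value, gives $yF'\le 2\gamma F$. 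Translating back via $F(y)=t,\ F'(y)=1/f'(t),\ y=f(t)$ produces the sandwich in $(f)_6$.

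Finally, $(f)_9$ reduces to maximizing the single-variable function $h(x)=x(1+cx^p)^{-1/p}$ on $[0,\infty)$; a direct differentiation shows $h$ is increasing with $\lim_{x\to\infty}h(x)=(2\gamma)^{-(p-1)/p}$, which bounds $|f|^{2\gamma-1}f'$. For $(f)_{10}$, differentiating $f'f^{\delta}$ and using the ODE to substitute
$$f''\;=\;-c(2\gamma-1)\,f^{p(2\gamma-1)-1}\,(f')^{p+2}$$
yields, after algebra,
$$\bigl(f'f^{\delta}\bigr)'\;=\;f^{\delta-1}(f')^{2}\,\frac{c\,(\delta-(2\gamma-1))\,f^{p(2\gamma-1)}+\delta}{1+c\,f^{p(2\gamma-1)}}\;\ge\;0$$
whenever $\delta\ge 2\gamma-1$ and $t\ge 0$. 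The main obstacle is simply keeping the bookkeeping straight in $(f)_6$; all remaining items are routine once the inverse representation through $F$ is in hand.
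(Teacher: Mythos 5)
Your proposal is correct, and it is close in spirit to the paper's argument: the paper also passes to the primitive $F(y)=\int_0^y[1+(2\gamma)^{p-1}|s|^{p(2\gamma-1)}]^{1/p}\,ds$ (implicitly, in the proof of $(f)_5$ via the substitution $z=f(s)$), and its auxiliary functions $F_1(t)=2\gamma t-f(t)/f'(t)$ and $F_2(t)=t-f(t)/f'(t)$ for $(f)_6$ are exactly your $2\gamma F(y)-yF'(y)$ and $F(y)-yF'(y)$ read at $y=f(t)$; your sign computations agree with theirs. Where you genuinely diverge: for $(f)_1$ you build $f=F^{-1}$ directly, which settles uniqueness without invoking Picard (whose Lipschitz hypothesis is delicate at the origin when $p(2\gamma-1)<1$); for $(f)_7$ you extract the explicit limit $A=(2\gamma)^{1/(2\gamma p)}$ from the asymptotics of $F$, whereas the paper only gets existence of a positive limit from monotonicity of $f(t)/t^{1/2\gamma}$ (via $(f)_6$) plus the bound $(f)_5$; and for $(f)_8$ your subadditivity bound $F(y)\le y+(2\gamma)^{-1/p}y^{2\gamma}$ gives an explicit global constant, cleaner than the paper's compactness argument combining the limits $(f)_4$ and $(f)_7$. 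Your computations for $(f)_5$ and $(f)_{10}$ are correct and equivalent to the paper's.

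One point to fix in the write-up of $(f)_9$: your (correct) maximization gives $\sup_{t}|f(t)|^{2\gamma-1}f'(t)=(2\gamma)^{-(p-1)/p}$, and this implies the stated bound $2^{-(p-1)/p}$ only when $2\gamma\ge 2$, i.e.\ $\gamma\ge 1$. For $1/2<\gamma<1$ the sharp supremum $(2\gamma)^{-(p-1)/p}$ exceeds $2^{-(p-1)/p}$, so the constant in the lemma as printed cannot be reached by your estimate (indeed it appears to be a misprint in the paper, whose own ``proof'' of $(f)_9$ is just the assertion that it is immediate). You should either state the bound with the constant $(2\gamma)^{-(p-1)/p}$ or restrict the claim to $\gamma\ge 1$; since $(f)_9$ is not used elsewhere in the paper, this does not propagate.
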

\begin{proof}$of~(f)_1:$  Considering  the  problem 
\begin{equation}\label{ode}
   \left\{
\begin{array}{lc}
y'={[1+(2\gamma)^{p-1}|y|^{p(2\gamma-1)}]^{-1/p}},& t>0,\\
y(0)=0,&
 \end{array}
\right.
\end{equation}
it follows from Theorem of existence and uniqueness for initial value problem in ordinary differential equations that the problem (\ref{ode}) has an unique solution, namely, $y=f(t)$. Besides this, $f'(t)>0$ for all $t\in\mathbb{R}$ implies that $f$ is invertible.

\noindent $Proof~of~(f)_2:$ It follows from above that $f'(t)={[1+(2\gamma)^{p-1}|f(t)|^{p(2\gamma-1)}]^{{-1}/{p}}}\in(0,1],\ t\in[0,\infty)$.\\
Now, for $t\leq 0$, it follows from the definition of $f$ that
$f'(t)=f'(-t)={[1+(2\gamma)^{p-1}|f(-t)|^{p(2\gamma-1)}]^{{-1}/{p}}}\in(0,1],$ as well.

\noindent $Proof~of~(f)_3:$ This  follows from $(f)_2$, $f(0)=0$, and  $f'(t)\leq 1$ for all $t \geq 0$, together with the fact of $f$ being a odd function.

\noindent $Proof~of~(f)_4:$ Since, $$f(t)=\int_0^t\frac{ds}{[1+(2\gamma)^{p-1}|f(s)|^{p(2\gamma-1)}]^{{1}/{p}}}=
\frac{t}{[1+(2\gamma)^{p-1}|f(\eta)|^{p(2\gamma-1)}]^{{1}/{p}}},$$
for some $\eta\in(0,t)$, we obtain from this information, that 
$$\lim_{t\to0}\frac{f(t)}{t}=\lim_{\eta\to0}\frac{1}{[1+(2\gamma)^{p-1}|f(\eta)|^{p(2\gamma-1)}]^{{1}/{p}}}=1.$$

\noindent $Proof~of~(f)_5:$ Since, $f'(t)[1+(2\gamma)^{p-1}|f(t)|^{p(2\gamma-1)}]^{{1}/{p}}=1,$ for all $t \in \mathbb{R}$, it follows by integration that
$$\int_0^tf'(s)[1+(2\gamma)^{p-1}|f(s)|^{p(2\gamma-1)}]^{\frac{1}{p}}ds=t,\ \mbox{for}\ t>0.$$

So, doing the change variable $z=f(s)$, we obtain
$$
 t=\int_0^{f(t)}[1+(2\gamma)^{p-1}z^{p(2\gamma-1)}]^{1/p}dz
\geq(2\gamma)^{{(p-1)}/{p}}\int_0^{f(t)}z^{(2\gamma-1)}dz
=(2\gamma)^{{(p-1)}/{p}}\frac{f(t)^{2\gamma}}{2\gamma},
$$
that is, $f(t)^{2\gamma}\leq (2\gamma)^{1/p}t$, for  all $t\geq0$. Since, $f$ is an odd function, it follows the claim $(f)_5$. 

\noindent $Proof~of~(f)_6:$ Let us define  $F_1(t):=2\gamma t-f(t)[1+(2\gamma)^{p-1}f(t)^{p(2\gamma-1)}]^{1/p}$, $t \geq 0$ and note that  $F_1(0)=0$ and
$$
\begin{array}{lll}
 F_1'(t)&=&\displaystyle(2\gamma-1)-(2\gamma-1)(2\gamma)^{p-1}f(t)^{p(2\gamma-1)}[1+(2\gamma)^{p-1}f(t)^{p(2\gamma-1)}]^{\frac{1-p}{p}}f'(t)\\
 \\
&=&\displaystyle\frac{2\gamma-1}{1+(2\gamma)^{p-1}f(t)^{p(2\gamma-1)}}=(2\gamma-1)f'(t)^p,~t>0.
\end{array}
$$
So, it follows from $(f)_2$ that $F_1'(t)>0,~t>0$, that is, the first inequality follows from the non-negativeness of $F_1$.

In a similar way, defining $F_2(t):=t-f(t)[1+(2\gamma)^{p-1}f(t)^{p(2\gamma-1)}]^{1/p}$, $t \geq0$ 
and noting that $F_2(0)=0$ and 
$$
\begin{array}{ccc}
 F_2'(t)
&=&-(2\gamma-1)(2\gamma)^{p-1}f(t)^{p(2\gamma-1)}[1+(2\gamma)^{p-1}f(t)^{p(2\gamma-1)}]^{\frac{1-p}{p}}f'(t)<0,~t>0,
\end{array}
$$
it follows our second inequality. 

\noindent $Proof~of~(f)_7:$ Since, 
$$
\begin{array}{ccc}
 \displaystyle\Big( \displaystyle\frac{f(t)}{t^{1/2\gamma}}\Big)'&=& \displaystyle\frac{f'(t)t^{1/2\gamma}-\frac{1}{2\gamma}t^{\frac{1}{2\gamma}-1}f(t)}{t^{1/\gamma}}
= \displaystyle\frac{2\gamma tf'(t)-f(t)}{2\gamma t t^{1/2\gamma}},
\end{array}
$$
it follows from $(f)_6$, that 
 ${f(t)}/{t^{1/2\gamma}},~t>0$ is increasing. So, by using  $(f)_5$, we obtain our claim.

\noindent $Proof~of~(f)_8:$ It follows from $(f)_4$,  $(f)_7$, and the fact of $f$ being odd, that 
$$
|f(t)|\geq \left\{
\begin{array}{ll}
 C|t|,& |t|\leq1,\\
C|t|^{1/2\gamma},& |t|\geq1,
\end{array}
\right.
$$
for some real positive constant  $C$. That is,
$|t|\leq C[|f(t)|+|f(t)|^{2\gamma}]$ for all $t\in\mathbb{R}.$

\noindent $Proof~of~(f)_9:$ This is an immediate   consequence of the definition of $f$.

\noindent $Proof~of~(f)_{10}:$ It follows from definition, that
$$f''(t)=-(2\gamma)^{p-1}(2\gamma-1)f(t)^{p(2\gamma-1)-1}[1+(2\gamma)|f(t)|^{p(2\gamma-1)}]^{-{2}/{p}-1}~t\geq 0,$$
that is,
$$(f'(t)f(t)^\delta)'=[1+(2\gamma)|f(t)|^{p(2\gamma-1)}]^{-{2}/{p}}f(t)^{\delta-1}
\frac{\delta+(2\gamma)^{p-1}(\delta+1-2\gamma)f(t)^{p(2\gamma-1)}}{1+(2\gamma)^{p-1}|f(t)|^{p(2\gamma-1)}}>0,~t>0,$$
because $\delta \geq 2 \gamma -1$, by hypothesis. These end our proof.
 \end{proof}
\fim

Below, we are going to apply the the function $f$ determined by (\ref{mv}) to reduce (\ref{P}) to another one. So, we have.

\begin{lemma}\label{lemma2} 
Assume $u=f(w)$ $($or $w=f^{-1}(u)$$)$. Then  $u$ is a solution of (\ref{P}) if, and only if $w$ is a solution of the problem 
 \begin{equation}\label{PP}
 \left\{
\begin{array}{c}
 \Delta_pw=a(x)g(f(w))f'(w)\ \ \ in \ \mathbb{R}^N,\\
w\geq0\  in\ \mathbb{R}^N,\ \  w(x)\stackrel{\left|x\right|\rightarrow \infty}{\longrightarrow} \infty.
\end{array}
\right.
\end{equation}
\end{lemma}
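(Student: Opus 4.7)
The plan is to work entirely at the level of the weak formulation given in Section~1 and show that, under the change of variable $u = f(w)$, the weak form of (\ref{P}) transforms into the weak form of (\ref{PP}) after a matching change of test function $\psi = \varphi/f'(w)$.

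First I would dispense with the preliminaries. By Lemma~\ref{lemma1} $(f)_1$--$(f)_2$, $f$ is a $C^1$ strictly increasing diffeomorphism of $\mathbb{R}$ with $f(0)=0$ and $f'>0$; hence $u=f(w)$ yields a $C^1$-bijection between positive $C^1$-functions $u$ and positive $C^1$-functions $w$. From $(f)_3$ we get $u\le w$ and from $(f)_7$ we get $w\sim (u/A)^{2\gamma}$ for large $u$, so $u(x)\to\infty$ as $|x|\to\infty$ is equivalent to $w(x)\to\infty$. Thus the boundary-at-infinity conditions in (\ref{P}) and (\ref{PP}) are equivalent, and only the PDE identity remains.

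Second, I would plug $u=f(w)$ (so that $\nabla u = f'(w)\nabla w$) directly into the weak form of (\ref{P}) stated just before Theorem~\ref{THRN}. Using $|\nabla u|^{p-2}\nabla u = f'(w)^{p-1}|\nabla w|^{p-2}\nabla w$, $|\nabla u|^p = f'(w)^p|\nabla w|^p$ and $u^{p(2\gamma-1)}=f(w)^{p(2\gamma-1)}$, that weak form becomes
$$
\int [1+(2\gamma)^{p-1}f(w)^{p(2\gamma-1)}]f'(w)^{p-1}|\nabla w|^{p-2}\nabla w\cdot\nabla\varphi\,dx
$$
$$
+\,(2\gamma)^{p-1}(2\gamma-1)\int f(w)^{p(2\gamma-1)-1}f'(w)^p|\nabla w|^p\varphi\,dx
+\int a(x)g(f(w))\varphi\,dx = 0.
$$

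Third, the defining ODE (\ref{mv}) gives $f'(w)^{-p}=1+(2\gamma)^{p-1}f(w)^{p(2\gamma-1)}$, so the coefficient in front of $|\nabla w|^{p-2}\nabla w\cdot\nabla\varphi$ collapses to $1/f'(w)$. Differentiating that identity also yields
$$
f''(w) = -(2\gamma)^{p-1}(2\gamma-1)\,f(w)^{p(2\gamma-1)-1}f'(w)^{p+2},
$$
the same identity that appears in the proof of $(f)_{10}$. Now I would take the test function $\psi=\varphi/f'(w)$, which is a valid test function since $f'\in C^1$ and $f'>0$, so $\psi\in C_0^1$ whenever $\varphi\in C_0^\infty$ (and $C_0^1$ is admissible for the $p$-Laplacian weak formulation by a standard density argument). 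Computing
$$
\nabla\psi = \frac{\nabla\varphi}{f'(w)} - \frac{\varphi\,f''(w)}{f'(w)^2}\nabla w
$$
and using the displayed expression for $f''(w)$ to match the $|\nabla w|^p$ integrand, the equation displayed in the previous paragraph rewrites exactly as
$$
-\int |\nabla w|^{p-2}\nabla w\cdot\nabla\psi\,dx = \int a(x)g(f(w))f'(w)\,\psi\,dx,
$$
which is the weak form of (\ref{PP}). The converse direction is identical: given $\psi\in C_0^\infty$, the function $\varphi=f'(w)\psi\in C_0^1$ is admissible, and the same chain of equalities runs in reverse.

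The main obstacle is bookkeeping rather than conceptual: one must carefully combine the two ``divergence'' contributions of (\ref{P}) (noting the exponent identity $(2\gamma-1)(p-1)+(2\gamma-1)=p(2\gamma-1)$), and then arrange the cancellation between the $|\nabla w|^p$-term coming from the quasilinear piece of (\ref{P}) and the $|\nabla w|^p$-term produced by differentiating $1/f'(w)$ in $\nabla\psi$; this cancellation is exactly the differentiated ODE for $f$. A short justification that the transformed test function is admissible completes the equivalence.
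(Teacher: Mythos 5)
Your proposal is correct and follows essentially the same route as the paper: both substitute $u=f(w)$ into the weak formulation and pass to the modified test function $\varphi/f'(w)=[1+(2\gamma)^{p-1}|u|^{p(2\gamma-1)}]^{1/p}\varphi$, with the cancellation of the $|\nabla w|^p$-terms coming from differentiating the defining ODE for $f$ (the paper writes this as the gradient of $[1+(2\gamma)^{p-1}|u|^{p(2\gamma-1)}]^{1/p}$ rather than via $f''$, which is the same computation). The handling of the positivity and blow-up equivalences via Lemma~\ref{lemma1} also matches the paper's argument.
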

\begin{proof} First, note that it follows from Lemma \ref{lemma1}-$(f)_1$ that $u\in C^1(\mathbb{R}^N)$ if, and only if, $w\in C^1(\mathbb{R}^N)$ and $u \geq 0$ if, and only if, $w\geq 0$. Besides this, it follows from Lemma \ref{lemma1}-$(f)_5$ and $(f)_8$ that
$w(x)\to\infty$ as $|x|\to\infty$ if, and only if, $u(x)\to\infty$ as $|x|\to\infty$. To complete the proof,  since
 $$\nabla u=f'(w)\nabla w=\frac{1}{[1+(2\gamma)^{p-1}|f(w)|^{p(2\gamma-1)}]^{\frac{1}{p}}}\nabla w=
\frac{1}{[1+(2\gamma)^{p-1}|u|^{p(2\gamma-1)}]^{\frac{1}{p}}}\nabla w,$$
we get to
$$(1+(2\gamma)^{p-1}|u|^{p(2\gamma-1)})|\nabla u|^{p-2}\nabla u=[1+(2\gamma)^{p-1}|u|^{p(2\gamma-1)}]^{\frac{1}{p}}
|\nabla w|^{p-2}\nabla w,$$
that is,
$$
\int_{\mathbb{R}^N}(1+(2\gamma)^{p-1}|u|^{p(2\gamma-1)})|\nabla u|^{p-2}\nabla u\nabla\varphi dx=
\int_{\mathbb{R}^N}[1+(2\gamma)^{p-1}|u|^{p(2\gamma-1)}]^{\frac{1}{p}}|\nabla w|^{p-2}\nabla w\nabla\varphi dx,
$$
for all $\varphi\in C_0^1(\mathbb{R}^N)$.

On the other side, since 
 $$[1+(2\gamma)^{p-1}|u|^{p(2\gamma-1)}]^{1/p}\nabla\varphi=\nabla\{[1+(2\gamma)^{p-1}|u|^{p(2\gamma-1)}]^{1/p}\varphi\}-
\nabla[1+(2\gamma)^{p-1}|u|^{p(2\gamma-1)}]^{1/p}\varphi,$$
holds, we obtain that
$$
\begin{array}{lll}
\displaystyle \int_{\mathbb{R}^N}(1+(2\gamma)^{p-1}|u|^{p(2\gamma-1)})|\nabla u|^{p-2}\nabla u\nabla\varphi dx&=&\displaystyle\int_{\mathbb{R}^N}
|\nabla w|^{p-2}\nabla w\nabla\{[1+(2\gamma)^{p-1}|u|^{p(2\gamma-1)}]^{1/p}\varphi\}dx\\
\\
&-&\displaystyle\int_{\mathbb{R}^N}|\nabla w|^{p-2}\nabla w\nabla[1+(2\gamma)^{p-1}|u|^{p(2\gamma-1)}]^{1/p}\varphi dx\\
\\
&=&-\displaystyle\int_{\mathbb{R}^N}[a(x)g(u)+(2\gamma)^{p-1}(2\gamma-1)|\nabla u|^p|u|^{p(2\gamma-1)-1}]\varphi dx.
\end{array}
$$
for all $\varphi\in C_0^1(\mathbb{R}^N)$. So, by density, it follows our proof.
\end{proof}

\section{Proof of Theorems}

In the sequel, we are going to apply the last two Lemmas, together with some ideas found in \cite{mahammed}, to complete our proof.

\noindent\begin{proof}$of~ Theorem~ \ref{THRN}$. Since $a(x)=a(\vert x \vert)$, $x \in \mathbb{R}^N$, we have that (\ref{PP}) is equivalent to the problem
\begin{equation}\label{w}
 \left\{ 
\begin{array}{l}
 (r^{N-1}|w'|^{p-2}w')'=r^{N-1}a(r)g(f(w))f'(w)\ \ \ in \ (0,\infty),\\
w'(0)=0,\ \ w(0)=\alpha\geq 0,
\end{array}
\right.
\end{equation}
where $r=\vert x \vert \geq 0$ and $\alpha\geq 0$ is a real number.  Now, since $a$, $g$ and $f'$ are continuous functions, it follows from an approach in \cite{haitao} that there exists a  
$\Gamma(\alpha)>0$ (maximal extreme to the right for the existence interval of solutions for (\ref{w}), and a 
$w_\alpha\in C^2(0,\Gamma(\alpha))\cap C^1([0,\Gamma(\alpha)))$ solution of (\ref{w}) on $(0,\Gamma(\alpha))$, for each $\alpha> 0$ given.

If we assumed that $\Gamma(\alpha)<\infty$ for some $\alpha>0$, we obtain by standard arguments on ordinary differential equations that $w_\alpha(r)\to\infty$ as $r\to\Gamma(\alpha)^-$, that is, $w_\alpha(|x|)$ would satisfies to the problem
$$
 \left\{ 
\begin{array}{l}
 (r^{N-1}|w'|^{p-2}w')'=r^{N-1}a(r)g(f(w))f'(w)\ \ \ in \ (0,\infty),\\
w(0)=\alpha>0,~w'(0)=0,\  w_\alpha(x)\stackrel{r\rightarrow \Gamma(\alpha)^-}{\longrightarrow} \infty.
\end{array}
\right.
$$

Moreover, it follows from Lemma \ref{lemma1}-$(f)_2$,  that $w$ satisfies
\begin{equation}\label{ww}
 \left\{ 
\begin{array}{l}
 (r^{N-1}|w'|^{p-2}w')'\leq a_\infty r^{N-1}g(f(w))\ \ \ in \ (0,\infty),\\
w(0)=\alpha>0,~w'(0)=0,\  w_\alpha(x)\stackrel{r\rightarrow \Gamma(\alpha)^-}{\longrightarrow} \infty,
\end{array}
\right.
\end{equation}
where  $a_\infty=\max_{\bar{B}_{\Gamma(\alpha)}}a(x)$.

Since $u'\geq 0$, we can rewrite the inequality in (\ref{ww}) as 
$$\Big( (w')^{p-1}\Big)' \leq a_{\infty} (g\circ f)(w),~\mbox{for all}~0<r<\Gamma(\alpha) $$
and multiplying by ${w'}$ and integrating on $(0,r)$, we obtain 
$$\displaystyle\frac{p-1}{p}\Big( w'(r)\Big)^p \leq \int_0^r  a_{\infty} (g\circ f)(w(s))w'(s) ds = a_{\infty}\int_0^{w(r)}(g\circ f)(s) ds - a_{\infty}\int_0^{w(0)}(g\circ f)(s) ds,
$$
that is,
 $${\Big(\int_0^{w(r)}(g\circ f)(s) ds \Big)^{-1/p}{w'(r)}}\leq \Big(a_{\infty}\frac{p}{p-1}\Big)^{1/p},~\mbox{for all}~0<r<\Gamma(\alpha).$$
 
Now, by integrating in the last inequality over $(0,\Gamma(\alpha))$ and reminding that $w_\alpha(x){\to} \infty$ as ${r\rightarrow \Gamma(\alpha)^-}$, we obtain
 
\begin{eqnarray}
\label{comp1}
\int_{w(0)}^\infty\Big(\int_0^t(g\circ f)(s)ds\Big)^{-{1}/{p}}dt \leq \Big(a_{\infty}\frac{p}{p-1}\Big)^{1/p} \Gamma(\alpha) < \infty.
\end{eqnarray}

On the other side, it follows from Lemma \ref{lemma1}-$(f)_3$, and monotonicity of $g$, that
$(g\circ f)(t))\leq g(t)$ for all $t\geq0$. That is,  
$$\int_0^t(g\circ f)(s)ds\leq\int_0^tg(s)ds=G(t),~t \geq 0.$$

As a consequence of this, we have
$$ \int_{w(0)}^\infty G(t)^{-{1}/{p}}dt\leq \int_{w(0)}^\infty\Big(\int_0^t(g\circ f)(s)ds\Big)^{-{1}/{p}}dt.$$
So, it follows from (\ref{comp1}), that
$$ \int_{1}^\infty G(t)^{-{1}/{p}}dt< \infty,$$
but this is impossible, because we are assuming that $g$ satisfies the hypothesis {\bf(G)}.

To complete the proof,  it follows from 
$(f)_3$, $(f)_7$ and of  definition of $f$, that there exist real constants $A_1, A_2>0$ such that
$$f(t)\geq A_1t^{1/2\gamma}\ \ \mbox{and}\ \ f'(t)\geq A_2t^{1-2\gamma}\ \mbox{for all}\ \ t>\mathcal{A},$$
for some $\mathcal{A}>0$. So, as a consequence of these, we have
$$
\begin{array}{lll}
 \displaystyle g(f(w_\alpha(r)))f'(w_\alpha(r))&\geq&A_2g(f(w_\alpha))w_\alpha^{1-2\gamma}=A_2\frac{g(f(w_\alpha))}{f(w\alpha)^{2\gamma(2\gamma-1)}}[\frac{f(w_\alpha)^{2\gamma}}{w_\alpha}]^{(2\gamma-1)}\\
\\
&\geq&\displaystyle A_2A_1^{(2\gamma-1)}\frac{g(f(w_\alpha))}{f(w_\alpha)^{2\gamma(2\gamma-1)}}:=M>0,~\mbox{for all}~r>0,
\end{array}
$$
and for each $\alpha>\mathcal{A}$ given, because $w_\alpha(r) \geq \alpha$, for all $r\geq 0$.

Since, $w_\alpha$ satisfies
$$w_\alpha(r)=\alpha+\int_0^r\Big(t^{1-N}\int_0^ts^{N-1}a(s)g(f(w_\alpha))f'(w_\alpha)ds\Big)^{1/p-1}dt,~r\geq 0,$$
it follows from above informations, that
$$w_\alpha(t)\geq\alpha+M^{1/p-1}\int_0^r\Big(t^{1-N}\int_0^rs^{N-1}a(s)ds\Big)^{1/p-1}dt\to\infty,\ \mbox{when}\ r\to\infty.$$
This end the proof.
\end{proof}
\fim

\noindent\begin{proof}$of~ Theorem~ \ref{T2}$.
Given $\beta>\alpha>\mathcal{A}$, where $\mathcal{A}>0$ was given above,  it follows from  Theorem \ref{THRN} that there exist positive and radial solutions $w_\alpha$ and $w_\beta$  to 
the problems
$$
\left\{
\begin{array}{l}
 \Delta_pw_\alpha=\overline{a}(|x|)g(f(w_\alpha))f'(w_\alpha)\ in~\mathbb{R}^N,\\
w_\alpha(0)=\alpha,~ w_\alpha(x)\stackrel{\left|x\right|\rightarrow \infty}{\longrightarrow} \infty,
\end{array}
\right.~~~~\mbox{and}~~~~\left\{
\begin{array}{l}
 \Delta_pw_\beta=\underline{a}(|x|)g(f(w_\beta))f'(w_\beta)\ in~\mathbb{R}^N,\\
w_\beta(0)=\beta,~ w_\beta(x)\stackrel{\left|x\right|\rightarrow \infty}{\longrightarrow} \infty,
\end{array}
\right.
$$
respectively, where $\underline{a}$ and $\overline{a}$ were defined in (\ref{defa}).

Besides this, it follows from $w_\alpha$ and $g$ be nondecreasing,  (\ref{a}) and Lemma \ref{lemma1}-$(f)_3$, that
$$
\begin{array}{lll}
 w_\alpha(r)&\leq&2g(w_\alpha(r))^{1/(p-1)}\displaystyle\int_0^r \Big(\int_0^t\overline{a}(s)ds\Big)^{1/p-1}dt\\
&\leq&\displaystyle2g(w_\alpha(r))^{1/(p-1)}\Big[r\Big(\int_0^r\overline{a}(t)dt\Big)^{1/p-1}-\frac{1}{p-1}\int_0^rt\overline{a}(t)
\Big(\int_0^t\overline{a}(s)ds\Big)^{2-p/p-1}dt\Big] \\
&\leq&\displaystyle2rg(w_\alpha(r))^{1/(p-1)}\Big(\int_0^r\overline{a}(t)dt\Big)^{1/p-1},
\end{array}
$$
for all $r>0$ sufficiently large. That is, 
$$
w_\alpha(r)\leq \mathcal{G}^{-1}\Big(r\Big(\int_0^r \overline{a}(t)dt\Big)^{1/p-1}\Big),~\mbox{for all}~r>>0.
$$

Now, setting 
$$0<S(\beta)=sup\{r>0 ~/~w_\alpha(r)<w_\beta(r)\} \leq \infty,$$
 we claim that $S(\beta)=\infty$ for all $\beta>\alpha + \overline{H}$, for each $\alpha>\mathcal{A}$ given. In fact, by assuming this is not 
true, then there exists a $\beta_0>\alpha + \overline{H}$ such that $w_\alpha(S(\beta_0))=w_\beta(S(\beta_0))$. So, by using that $g$ is 
non-decreasing, Lemma \ref{lemma1}-$(f)_{10}$ and $w_{\alpha} \leq w_{\beta}$ on $[0,S(\beta_0)]$, we obtain that  
\begin{equation}\label{wh}
\begin{array}{ll}
\beta_0 &= \displaystyle\alpha +\int_0^{S(\beta_0)}\Big[\Big(t^{1-N}\int_0^ts^{N-1}\overline{a}(s)g(f(w_\alpha(s)))f'(w_\alpha)ds\Big)^{1/p-1}\\
&-\displaystyle\Big(t^{1-N}\int_0^ts^{N-1}\underline{a}(s)g(f(w_\beta(s)))f'(w_\beta)ds\Big)^{1/p-1}\Big]dt\\
&= \displaystyle\alpha +\int_0^{S(\beta_0)}\Big[\Big(t^{1-N}\int_0^ts^{N-1}\overline{a}(s)g(w_\alpha(s))f'(w_\alpha) ds\Big)^{1/p-1}\\
&-\displaystyle\Big(t^{1-N}\int_0^ts^{N-1}\underline{a}(s)\frac{g(f(w_\beta(s)))}{f(w_\beta)^\delta}f'(w_\beta)f(w_\beta)^\delta ds\Big)^{1/p-1}\Big]dt\\
&= \displaystyle\alpha +\int_0^{S(\beta_0)}\Big[\Big(t^{1-N}\int_0^ts^{N-1}\overline{a}(s)g(w_\alpha(s))f'(w_\alpha) ds\Big)^{1/p-1}\\
&-\displaystyle\Big(t^{1-N}\int_0^ts^{N-1}\underline{a}(s)g(f(w_\alpha(s)))f'(w_\alpha) ds\Big)^{1/p-1}\Big]dt
\end{array}
\end{equation}
holds.

On the other hands, it follows from $g,f$, and $w_\alpha$ being nondecreasing, that
$$
\begin{array}{lll}
0 &\leq& \Big[\Big(t^{1-N}\int_0^ts^{N-1}\overline{a}(s)g(f(w_\alpha))f'(w_\alpha)ds\Big)^{1/p-1}-
\Big(t^{1-N}\int_0^ts^{N-1}\underline{a}(s)g(f(w_\alpha))f'(w_\alpha)ds\Big)^{1/p-1}\Big] \chi_{[0,S(\beta)]}(t)\\
&=&\Big[\Big(t^{1-N}\int_0^ts^{N-1}[\overline{a}(s)-\underline{a}(s)]g(f(w_\alpha))f'(w_\alpha)ds+\Big(t^{1-N}\int_0^ts^{N-1}
\underline{a}(s)g(f(w_\alpha))f'(w_\alpha)ds\Big)\Big)^{1/p-1}\\
&-&\Big(t^{1-N}\int_0^ts^{N-1}\underline{a}(s)g(f(w_\alpha))f'(w_\alpha)ds\Big)^{1/p-1}\Big] \chi_{[0,S(\beta)]}(s)
\leq\Big(t^{1-N}\int_0^ts^{N-1}a_{osc}(s)g(f(w_\alpha))f'(w_\alpha)ds\Big)^{1/p-1}\\
&\leq&\Big(t^{1-N}\int_0^ts^{N-1}a_{osc}(s)ds\Big)^{1/p-1}\Big[ g\Big(\mathcal{G}^{-1}\Big(t\Big(\int_0^t\overline{a}(s)ds\Big)^{1/p-1}\Big)\Big)\Big]^{1/p-1}:
=\mathcal{H}(t) ,~t>>0,
\end{array}
$$
where $\chi_{[0,S(\beta)]}$ stands for the characteristic function of $[0,S(\beta)]$.

So, it follows from the hypothesis  $\bf{(\mathcal{G})}$ and  (\ref{wh}), that 
$$\beta_0 \leq \alpha +\int_0^\infty \mathcal{H}(s)ds\leq \alpha+ \overline{H},$$
but this is impossible. 

Now, by setting $\beta=(\alpha + \epsilon) + \overline{H}$, for each $\alpha>\mathcal{A},\epsilon>0$ given, and considering the problem
\begin{equation}\label{Wn}
 \left\{
\begin{array}{l}
 \Delta_p w={a}(x)g(f(w))f'(w)\ \mbox{in}\ B_n(0),\\
w\geq 0~\mbox{in}~B_n(0),~~w=w_\alpha\ \mbox{on}~ \partial B_n(0),
\end{array}
\right.
\end{equation}
we can infer by standard methods of sub and super solutions that there exists a $w_n = w_{n,\alpha} \in C^1(\overline{B}_n)$ solution of  
(\ref{Wn}) satisfying 
$A<\alpha\leq w_\alpha\leq w_n\leq w_\beta$ in $B_n$ for all $n \in \mathbb{N}$. So, by compactness, there exists a  
$w\in C^1(\mathbb{R}^N)$ such that $w(x)=\lim_{n\to\infty}w_n(x)$ is a  solution of (\ref{P}).\fim
\end{proof}
\begin{center}
\LARGE{Acknowledgement}
\end{center}

This paper was completed while the first author was visiting the Professor Haim Brezis at Rutgers University. He thanks to 
Professor Brezis by his incentive and hospitality. In this time, the second author was visiting the Professor Zhitao Zhang at 
Chinese Academy of Sciences. He is grateful by his invitation and 
hospitality as well.

\end{document}